\numberwithin{equation}{section}
\theoremstyle{definition}
\newtheorem{definition}{Definition}[section]
\newtheorem{example}[definition]{Example}
\newtheorem{custom}[definition]{}
\theoremstyle{remark}
\newtheorem{remark}[definition]{Remark}
\theoremstyle{plain}
\newtheorem{theorem}[definition]{Theorem}
\newtheorem{lemma}[definition]{Lemma}
\newtheorem{result}[definition]{Result}
\newtheorem{obs}[definition]{Observation}
\newcommand{\eps}{\varepsilon}
\newcommand{\zt}{\zeta}
\newcommand{\lam}{\lambda}
\newcommand{\exep}{\mathfrak{S}}
\newcommand{\banal}{\mathcal{A}}
\newcommand{\id}{\mathbb{I}}
\newcommand{\f}{\boldsymbol{\sf f}}
\newcommand{\bdy}{\partial\mathbb{D}}
\newcommand{\OM}{\Omega}
\newcommand{\Dee}{\mathbb{D}}
\newcommand{\cDee}{\overline{\mathbb{D}}}
\newcommand{\smoo}{\mathcal{C}}
\newcommand{\hol}{\mathcal{O}}
\newcommand{\poinc}{p_{\mathbb{D}}}
\newcommand\Lem[1]{\widetilde{\kappa}_{#1}}
\newcommand\hyper[2]{\left|\frac{{#1}-{#2}}{1-\overline{{#2}}{#1}}\right|}
\newcommand\ilnhyper[2]{\left|({#1}-{#2})(1-\overline{{#2}}{#1})^{-1}\right|}
\newcommand{\symdist}{\mathfrak{p}_n}
\newcommand{\bcdot}{\boldsymbol{\cdot}}
\newcommand\blah[2]{\left(\frac{{#1}-{#2}}{1-\overline{{#2}}{#1}}\right)}
\newcommand{\mobi}{\mathcal{M}_{\mathbb{D}}}
\newcommand{\mobd}{{\sf dist}_{\mathcal{M}}}
\newcommand{\impl}{\Longrightarrow}
\newcommand{\mapp}{\longrightarrow}
\newcommand{\Fd}{F_d}
\newcommand{\shrp}{{\sf G}^{A,d}}
\newcommand{\Shrp}{{\sf G}^{(d)}}
\newcommand{\ess}{\widetilde{{}s}}
\newcommand{\tr}{{\sf tr}}
\newcommand{\lammu}{\lambda^{(\mu)}}
\newcommand{\Cn}{\mathbb{C}^n}
\newcommand{\cplx}{\mathbb{C}}
\begin{document}

\title[Interpolation in the spectral unit ball]{Some new observations on interpolation \\ 
in the spectral unit ball}
\author{Gautam Bharali}
\address{Department of Mathematics, Indian Institute of Science, Bangalore -- 560 012}
\email{bharali@math.iisc.ernet.in}
\thanks{This work is supported in part by a grant from the UGC under DSA-SAP, Phase~IV. \\
To appear in {\bf{\em Integral Eqns. Operator Theory}}}
\keywords{Complex geometry, Carath{\'e}odory metric, minimial polynomial, Schwarz lemma, 
spectral radius, spectral unit ball}
\subjclass{Primary: 30E05, 47A56; Secondary: 32F45}

\begin{abstract} 
We present several results associated to a holomorphic-interpolation problem for the spectral
unit ball $\OM_n, \ n\geq 2$. We begin by showing that a known necessary condition for the existence
of a $\hol(\Dee;\OM_n)$-interpolant ($\Dee$ here being the unit disc in $\cplx$), given that the
matricial data are non-derogatory, is not sufficient. We provide next a new necessary condition
for the solvability of the two-point interpolation problem -- one which is {\em not} restricted only
to non-derogatory data, and which incorporates the Jordan structure of the prescribed data. We then
use some of the ideas used in deducing the latter result to prove a Schwarz-type lemma for 
holomorphic self-maps of $\OM_n, \ n\geq 2$.
\end{abstract}
\maketitle

\section{Introduction and Statement of Results}\label{S:intro}

The interpolation problem referred to in the title, and which links the assorted results
of this paper, is the following ($\Dee$ here will denote the open unit disc centered at $0\in\cplx$):

\begin{itemize}
\item[(*)] {\em Given $M$ distinct points
$\zt_1,\dots,\zt_M\in \Dee$ and matrices
$W_1,\dots, W_M$ in the spectral unit ball $\OM_n:=\{W\in M_n(\cplx):r(W)<1\}$,
find conditions on $\{\zt_1,\dots,\zt_M\}$ and $\{W_1,\dots,W_M\}$ such that there
exists a holomorphic map $F:\Dee\longrightarrow\OM_n$ satisfying $F(\zt_j)=W_j, \
j=1,\dots,M$.}
\end{itemize}

\noindent{In the above statement, $r(W)$ denotes the spectral radius of the $n\times n$
matrix $W$. Under a very slight simplification  -- i.e. that the interpolant $F$ in (*)
is required to satisfy $\sup_{\zt\in\Dee}r(F(\zt))<1$ -- the paper 
\cite{bercFoiasTann:svNPip90} provides a characterisation of the interpolation data
$((\zt_1,W_1),\dots,(\zt_M,W_M))$ that admit an interpolant of the type described. However,
this characterisation involves a non-trivial search over a region in $\cplx^{n^2M}$. Thus,
there is interest in finding alternative characterisations that either: 
{\em a)} circumvent the need to perform a search; or {\em b)} reduce the dimension of the
search-region. In this regard, a new idea idea was introduced by Agler~\&~Young in the
paper \cite{aglerYoung:cldC2si99}. This idea was further developed over several 
works -- notably in \cite{aglerYoung:2psNPp00}, in the papers \cite{costara:22sNPp05} 
and \cite{costara:osNPp05} by Costara, and in David Ogle's thesis \cite{ogle:oftsp99}. It
can be summarised in two steps as follows:
\begin{itemize}
\item If the matrices $W_1,\dots,W_M$ are all non-derogatory, then (*) is equivalent to
an interpolation problem in the {\em symmetrized polydisc} $G_n, \ n\geq 2$, which is
defined as
\[
G_n := \left\{(s_1,\dots,s_n)\in\cplx^n: \text{all the roots of} \  
z^n+\sum\nolimits_{j=1}^n(-1)^js_jz^{n-j}=0 \ \text{lie in $\Dee$}\right\}.
\]
\smallskip
  
\item The $G_n$-interpolation problem is shown to share certain aspects of the classical
Nevanlinna-Pick problems, either by establishing conditions for a von Neumann inequality 
for $\overline{G}_n$ -- note that $\overline{G}_n$ is compact -- or through function theory.
\end{itemize}

\noindent{It would be useful, at this stage, to recall the following}

\begin{definition}\label{D:nonderog} A matrix $A\in M_n(\cplx)$ is said to be {\em non-derogatory}
if the geometric multiplicity of each eigenvalue of $A$ is $1$ (regardless of its algebraic
multiplicity). The matrix $A$ being non-derogatory is equivalent to $A$ being similar
to the companion matrix of its characteristic polynomial -- i.e.,
if $z^n+\sum_{j=1}^ns_jz^{n-j}$ is the characteristic polynomial then
\[
A \ \text{\em is non-derogatory} \ \iff \ \text{\em $A$ is similar to}
                 \begin{bmatrix}
                        \ 0  & {} & {} & -s_n \ \\
                        \ 1  & 0  & {} & -s_{n-1} \ \\
                        \ {} & \ddots  & \ddots & \vdots \ \\
                                \ \text{\LARGE{0}} &   & 1 & -s_{1} \
                \end{bmatrix}_{n\times n}.
\]
\end{definition}
\smallskip

The Agler-Young papers treat the case $n=2$, while the last two works cited above consider
the higher-dimensional problem. The reader is referred to \cite{aglerYoung:2psNPp00} for
a proof of the equivalence of (*), given non-derogatory matricial data, and the appropriate
$G_n$-interpolation problem. The similarity condition given in Definition~\ref{D:nonderog}
is central to establishing this equivalence.
\smallskip

Before presenting the first result of this paper, we need to examine what is known
about (*) from the perspective of the $G_n$-interpolation problem. Since we would like to
focus on the matricial interpolation problem, we will paraphrase the results from 
\cite{ogle:oftsp99} and \cite{costara:osNPp05} in the language of non-derogatory matrices.
Given an $n\times n$ complex matrix $W$, let its characteristic polynomial
$\chi^W(z)=z^n+\sum_{j=1}^n(-1)^js_j(W)z^{n-j}$, and define the rational
function
\[
\f(z;W) \ := \ \frac{\sum_{j=1}^njs_j(W)(-1)^jz^{j-1}}
				{\sum_{j=0}^{n-1}(n-j)s_j(W)(-1)^jz^{j}}.
\]
Then, the most general statement that is known about (*) is:

\begin{result}[{paraphrased from \cite{ogle:oftsp99} and \cite{costara:osNPp05}}]
\label{R:neccCon} Let $\zt_1,\dots,\zt_M$ be $M$ distinct points in $\Dee$ and
let $W_1,\dots, W_M\in\OM_n$ be non-derogatory matrices. If there exists a map
$F\in\hol(\Dee,\OM_n)$ such that $F(\zt_j)=W_j, \ j=1,\dots,M$, then the matrices
\begin{equation}\label{E:necc}
\left[\frac{1-\overline{\f(z};W_j)\f(z;W_k)}{1-\overline{\zt_j}\zt_k}\right]_{j,k=1}^M \ 
	\geq \ 0 \quad\text{for each $z\in\overline{\Dee}$}.
\end{equation}
\end{result}
\smallskip

\noindent{Here, and elsewhere in this paper, given two complex domains $X$ and $Y$, 
$\hol(X;Y)$ will denote the class of all holomorphic maps from $X$ into $Y$.}
\smallskip

\begin{remark} The matrices in \eqref{E:necc} may appear different from those in 
\cite[Corollary 5.2.2]{ogle:oftsp99}, but the latter are, in fact, 
$*$-congruent to the matrices above.
\end{remark}
\smallskip

Even though Result~\ref{R:neccCon} provides only a necessary condition, \eqref{E:necc} is more 
tractable for small values of $M$ than the Bercovici-Foias-Tannenbaum condition. Its viability as 
a sufficient condition, at least for 
small $M$, has been discussed in both \cite{ogle:oftsp99} and \cite{costara:osNPp05}.
This is reasonable because {\em the latter condition is sufficient} when $n=2$ and $M=2$ 
(and the given matrices are, of course, non-derogatory); see \cite{aglerYoung:hgsb04}. Given
all these developments, it seems appropriate to begin with the following:

\begin{obs}\label{O:wontDo} When $n\geq 3$, the condition \eqref{E:necc} is {\em not sufficient}
for the existence of a $\hol(\Dee;\OM_n)$-interpolant for the prescribed data 
$((\zt_1,W_1),\dots,(\zt_M,W_M))$, where each $W_j\in\OM_n, \ j=1,\dots,M$, is non-derogatory.
\end{obs}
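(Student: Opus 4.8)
The plan is to exhibit an explicit counterexample with $n=3$ and $M=2$. Take $\zt_1=0$ and $\zt_2=\alpha$ for some $\alpha\in(0,1)$ to be chosen, and prescribe matrices $W_1,W_2\in\OM_3$ that are non-derogatory. The key observation is that $\f(z;W)$ depends only on the \emph{characteristic polynomial} of $W$, i.e. only on the symmetrized coordinates $(s_1(W),s_2(W),s_3(W))$; hence condition \eqref{E:necc} can be satisfied by choosing the spectra (equivalently, the points in $G_3$) so that the associated $2\times 2$ Pick-type matrix is positive for every $z\in\cDee$, while simultaneously arranging the \emph{Jordan structure} of the $W_j$ so that no holomorphic interpolant into $\OM_3$ can exist. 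The mechanism for the latter is a trace/similarity obstruction: if $F\in\hol(\Dee;\OM_n)$ interpolates, then for each $k\in\nat$ the map $\zt\mapsto \tr\big(F(\zt)^k\big)$ is a bounded holomorphic function on $\Dee$, and more refined invariants of $F(\zt)$ (such as the ranks of $F(\zt)-\mu I$ for eigenvalues $\mu$, or the structure of minimal polynomials) are constrained along the interpolation. In particular one can force $W_1$ to be, say, a non-derogatory matrix with a single eigenvalue $0$ of algebraic multiplicity $3$ (a $3\times 3$ nilpotent Jordan block) and $W_2$ a non-derogatory matrix with three distinct eigenvalues, chosen so that $(s_j(W_1))=(0,0,0)$ and $(s_j(W_2))$ sit in $G_3$ with the scalar Pick condition for the pair $(\f(\cdot;W_1),\f(\cdot;W_2))$ holding on $\cDee$.

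The steps, in order, are: (1) Compute $\f(z;W_1)$ and $\f(z;W_2)$ from the characteristic polynomials; since $W_1$ is nilpotent, $s_j(W_1)\equiv 0$, and $\f(z;W_1)\equiv 0$, so \eqref{E:necc} collapses to the single scalar inequality $1-|\f(z;W_2)|^2\ge 0$ for $z\in\cDee$ together with the automatic $2\times 2$ positivity, i.e. it reduces to $\|\f(\cdot;W_2)\|_{\infty,\cDee}\le 1$. (2) Choose the eigenvalues of $W_2$ — equivalently a point $(s_1,s_2,s_3)\in G_3$ — so that $\sup_{z\in\cDee}|\f(z;W_2)|\le 1$; this is possible because for $W_2$ close enough to $0$ in $\OM_3$ (eigenvalues near $0$) the rational function $\f(\cdot;W_2)$ is uniformly small on $\cDee$, so \eqref{E:necc} is comfortably satisfied. (3) Show that, despite \eqref{E:necc} holding, no $F\in\hol(\Dee;\OM_3)$ can satisfy $F(0)=W_1$, $F(\alpha)=W_2$ for \emph{any} $\alpha\in(0,1)$ or at least for suitable $\alpha$. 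For this, invoke the equivalence (valid since both $W_j$ are non-derogatory) with the $G_3$-interpolation problem for the data $(0,\sigma(W_1))=(0,(0,0,0))$ and $(\alpha,(s_1,s_2,s_3))$, and then derive a contradiction with a known necessary condition for $G_n$-interpolation that is \emph{strictly stronger} than \eqref{E:necc} — for instance the Carathéodory/Kobayashi-type inequality on $G_n$ or the full Bercovici-Foias-Tannenbaum condition — by pushing $(s_1,s_2,s_3)$ out toward the boundary behavior of $G_3$ in a direction where the genuine metric on $G_3$ blows up even though $\f$ stays bounded. Concretely, one exploits that $\f$ only sees a one-parameter "slice" of the geometry of $G_n$, so one can move $W_2$ within a level set of $\sup_{\cDee}|\f(\cdot;W_2)|$ while the actual Carathéodory distance $c_{G_3}\big((0,0,0),(s_1,s_2,s_3)\big)$ exceeds the hyperbolic distance $p_{\Dee}(0,\alpha)$.

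The main obstacle will be Step (3): producing a \emph{verifiable} obstruction that kills interpolation while \eqref{E:necc} survives. The cleanest route is probably to use the Agler–Young/Costara machinery directly — namely that solvability of the $G_n$-problem implies positivity of a family of matrices built from \emph{all} the functions $\Phi_\omega$, $\omega\in\overline{\Dee}$, of the form $\Phi_\omega(s)=\big(\text{something like }\tfrac{\text{nonconstant combination of } s_j}{\text{another combination}}\big)$, of which $\f$ is only the ``$\omega=$ fixed'' specialization; then it suffices to find one value of $\omega$, and a choice of $W_2$, for which the corresponding $2\times 2$ matrix fails to be positive while the $\f$-matrix \eqref{E:necc} is positive for all $z$. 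Thus the real content is an inequality-chasing argument showing these two families of conditions are genuinely different for $n\ge 3$ — I expect this to require a careful but elementary computation with $3\times 3$ characteristic polynomials (parametrizing $W_2$ by its eigenvalues $\mu_1,\mu_2,\mu_3$ near $0$), choosing them, e.g., collinear and small but with one of them relatively ``large in a bad direction,'' and then exhibiting explicit $z$ and $\omega$ witnessing the discrepancy. A secondary, lighter obstacle is to confirm the non-derogatory hypothesis is not accidentally violated — but a nilpotent Jordan block and a matrix with distinct eigenvalues are both non-derogatory, so this is immediate.
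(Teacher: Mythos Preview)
Your overall architecture is the same as the paper's: take $W_1$ a non-derogatory nilpotent (so $\f(\cdot;W_1)\equiv 0$), take $W_2$ the companion matrix of a suitable polynomial, and then exploit a gap between what \eqref{E:necc} measures and a genuine obstruction coming from the complex geometry of $G_n$. However, there are two concrete problems.

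First, your Step~(1) reduction is wrong. With $\zt_1=0$, $\zt_2=\alpha$, and $\f(z;W_1)\equiv 0$, the $2\times 2$ Pick matrix in \eqref{E:necc} has off-diagonal entries equal to $1$, and its positivity is equivalent to
\[
\sup_{z\in\cDee}|\f(z;W_2)| \ \le \ |\alpha| \ = \ \mobi(\zt_1,\zt_2),
\]
\emph{not} to $\sup_z|\f(z;W_2)|\le 1$. The latter is automatic for any $W_2\in\OM_n$ (it is exactly membership of $\Pi_n(W_2)$ in $G_n$), so your Step~(2) would be vacuous. The correct reduction ties \eqref{E:necc} to the pseudohyperbolic radius $|\alpha|$, and this is the quantity you must beat.

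Second, your proposed ``cleanest route'' in the final paragraph will not work in this setup. When $W_1$ is nilpotent, the larger family $F(Z;\cdot)$, $Z\in\cDee^{\,n-1}$, gives \emph{no} extra information beyond the diagonal slice $f(z;\cdot)=\f(z;\cdot)$: Costara's result (Result~\ref{R:costaraKey}(3) in the paper) says $\sup_{Z}|F(Z;S)|=\sup_{z}|f(z;S)|$ for every $S\in G_n$, so you cannot find an ``off-diagonal $\omega$'' that separates. What the paper actually does is your other suggestion, the Carath\'eodory-distance one: writing $S_0=\Pi_n(W_2)$, the left-hand side above is (after $\tanh^{-1}$) exactly $\symdist(0,S_0)$, while existence of an interpolant forces $\Car{G_n}(0,S_0)\le \Lem{G_n}(0,S_0)\le\poinc(0,\alpha)$. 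The crucial, non-elementary input is Proposition~2 of Nikolov--Pflug--Thomas--Zwonek: for $n\ge 3$ one has $\symdist(0,\cdot)\neq \Car{G_n}(0,\cdot)$, so one can pick $S_0$ with $\Car{G_n}(0,S_0)>\symdist(0,S_0)$ and then choose $\alpha$ with $\poinc(0,\alpha)$ strictly between them. Your hope of replacing this by an ``elementary computation with $3\times 3$ characteristic polynomials'' is not substantiated; this strict inequality is precisely the hard part, and the paper outsources it to \cite{nikPflThoZwo:ecmsp07}.
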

\smallskip

The above observation relies on ideas from complex geometry; specifically -- estimates for 
invariant metrics on the symmetrized polydisc $G_n, \ n\geq 3$. Our argument follows from a
recent study \cite{nikPflThoZwo:ecmsp07} of the Carath{\'e}odory metric on $G_n, \ n\geq 3$. This
argument is presented in the next section.
\smallskip

Observation~\ref{O:wontDo} takes us back to the drawing board when it comes to realising
goals of the type {\em (a)} or {\em (b)} (as in the opening paragraph) to determine
whether a $\hol(\Dee;\OM_n)$-interpolant exists for a given data-set. Thus, new conditions 
that are inequivalent to \eqref{E:necc} are desirable for the same reasons as those offered in
\cite{aglerYoung:2psNPp00} and \cite{aglerYoung:2b2sNPp04}. To wit: all extant
approaches to implementing the Bercovici-Foias-Tannenbaum solution of (*) are computational,
and rely upon various search algorithms. Rigorous analytical results, even if they only
indicate when a data-set $((\zt_1,W_1),\dots,(\zt_M,W_M))$ does {\em not} admit an
$\hol(\Dee;\OM_n)$ interpolant -- i.e. necessary conditions -- provide tests of existing
algorithms/software and illustrate the complexities of (*). We will say more about
this; but first -- notations for our next result. Given $z_1,z_2\in\Dee$, the 
{\em pseudohyperbolic distance} between these points, written $\mobi(z_1,z_2)$, is defined as:
\[
\mobi(z_1,z_2) \ := \ \hyper{z_1}{z_2} \quad\forall z_1,z_2\in\Dee
\]
We can now state our next result.

\begin{theorem}\label{T:disc} Let $F\in\hol(\Dee;\OM_n)$, $n\geq 2$, and let $\zt_1,\zt_2\in\Dee$. 
Write $W_j=F(\zt_j)$, and let 
\begin{align}
\sigma(W_j) \ &:= \ \text{the {\em set} of eigenvalues of $W_j, \ j=1,2$} \notag \\ 
	&\quad \ \text{(i.e.~elements of $\sigma(W_j)$ are {\em not}
				 repeated according to multiplicity).}\notag 
\end{align}
If $\lam\in\sigma(W_j)$, then let $m(\lam)$ denote the multiplicity of $\lam$ as a zero of the
minimal polynomial of $W_j$. Then:
\begin{equation}\label{E:SchwarzIneq}
\max\left\{\max_{\mu\in\sigma(W_2)}\prod_{\lam\in\sigma(W_1)}\mobi(\mu,\lam)^{m(\lam)}, 
\ \max_{\lambda\in\sigma(W_1)}\prod_{\mu\in\sigma(W_2)}\mobi(\lam,\mu)^{m(\mu)}\right\} \ 
\leq \ \hyper{\zt_1}{\zt_2}.
\end{equation}
\end{theorem}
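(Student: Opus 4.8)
The plan is to reduce the statement to the classical Schwarz--Pick lemma on $\Dee$, applied to the determinant of a holomorphic matrix family, after using automorphisms of $\OM_n$ and the Jordan structure of $W_1$ to simplify the data.

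First, two reductions. Since $\hyper{\zt_1}{\zt_2}$ is symmetric in $\zt_1,\zt_2$ and the two quantities inside the outer $\max$ of \eqref{E:SchwarzIneq} are interchanged by swapping $(\zt_1,W_1)$ with $(\zt_2,W_2)$, it suffices to prove $\max_{\mu\in\sigma(W_2)}\prod_{\lam\in\sigma(W_1)}\mobi(\mu,\lam)^{m(\lam)}\le\hyper{\zt_1}{\zt_2}$. Fix $\mu\in\sigma(W_2)$ and set $B_\mu(\zt):=(\zt-\mu)(1-\overline{\mu}\zt)^{-1}$ (so that $\mobi(\mu,\lam)=|B_\mu(\lam)|$ for every $\lam\in\Dee$), a function holomorphic near $\cDee$ mapping $\Dee$ biholomorphically onto $\Dee$ with inverse $B_{-\mu}$. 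By the holomorphic functional calculus, $W\longmapsto B_\mu(W)$ is then a biholomorphic automorphism of $\OM_n$, so $G:=B_\mu\circ F\in\hol(\Dee;\OM_n)$. Now $0=B_\mu(\mu)\in\sigma(G(\zt_2))$, hence $\det G(\zt_2)=0$; and, because $B_\mu$ is biholomorphic near $\sigma(W_1)$ with nowhere-vanishing derivative, the matrices $W_1$ and $G(\zt_1)=B_\mu(W_1)$ have the same Jordan block sizes, so $\sigma(G(\zt_1))=\{B_\mu(\lam):\lam\in\sigma(W_1)\}$ and $B_\mu(\lam)$ has the same multiplicity $m(\lam)$ in the minimal polynomial of $G(\zt_1)$ as $\lam$ has in that of $W_1$. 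Therefore $\prod_{\lam\in\sigma(W_1)}\mobi(\mu,\lam)^{m(\lam)}=\prod_{\nu\in\sigma(G(\zt_1))}|\nu|^{m(\nu)}=|p(0)|$ with $p$ the minimal polynomial of $G(\zt_1)$. Renaming $G$ as $F$, the theorem is reduced to the implication
\[
(\star)\qquad F\in\hol(\Dee;\OM_n)\ \text{ and }\ 0\in\sigma(F(\zt_2))\ \Longrightarrow\ |p(0)|\le\hyper{\zt_1}{\zt_2},
\]
where $p$ is the minimal polynomial of $F(\zt_1)$ and $|p(0)|=\prod_{\nu\in\sigma(F(\zt_1))}|\nu|^{m(\nu)}$.

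When $F(\zt_1)$ is non-derogatory, $(\star)$ is immediate: $p=\chi^{F(\zt_1)}$, so $|p(0)|=|\det F(\zt_1)|$, while $\det F$ is a holomorphic self-map of $\Dee$ (since $|\det F(z)|\le r(F(z))^{n}<1$) that vanishes at $\zt_2$; Schwarz--Pick applied to $\det F$ gives $|\det F(\zt_1)|=\mobi(\det F(\zt_1),0)\le\hyper{\zt_1}{\zt_2}$. The content of the theorem is the derogatory case, and the strategy I would pursue is to reduce it to the non-derogatory one: produce a map $\Phi\in\hol(\Dee;\OM_d)$, $d:=\deg p$, with $\Phi(\zt_1)$ non-derogatory, $\chi^{\Phi(\zt_1)}=p$, and $0\in\sigma(\Phi(\zt_2))$. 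Granting such $\Phi$, the map $\det\Phi$ is a holomorphic self-map of $\Dee$ vanishing at $\zt_2$, so $|p(0)|=|\det\Phi(\zt_1)|\le\hyper{\zt_1}{\zt_2}$ by Schwarz--Pick, which is $(\star)$.

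The construction of $\Phi$ is the step I expect to be the main obstacle, and it is precisely here that the Jordan structure of $F(\zt_1)$ must genuinely be used. The obvious recipe — take a $d$-dimensional subspace $V$ cyclic for $F(\zt_1)$ and let $\Phi(z)$ be the compression (or a principal submatrix) of $F(z)$ to $V$ — does not work: $V$ is invariant under $F(\zt_1)$ but not under $F(z)$ for $z\ne\zt_1$, and, because $r(F(z))<1$ does not make $F(z)$ a contraction, compressions of $F(z)$ need not lie in $\OM_d$. Neither can one split off a degree-$d$ factor of $\chi^{F(z)}(t)$ with coefficients holomorphic on all of $\Dee$ and equal to $p$ at $\zt_1$: the minimal polynomial of $F(z)$ typically has degree exceeding $d$ for $z$ near but distinct from $\zt_1$, and the monodromy of the eigenvalue branches of $F$ around the points of $\Dee$ at which $F$ has repeated spectrum may act transitively on all $n$ sheets, so no monodromy-invariant sub-collection of size $d$ need exist. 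I would instead build $\Phi$ first on a disc about $\zt_1$ out of the companion-matrix data of a vector $v$ whose $F(\zt_1)$-annihilator ideal is generated by $p$ (such $v$ exists), and then obtain the global family and the condition at $\zt_2$ either by a normal-families argument exploiting the openness of $\OM_d$, or — in the spirit of the earlier part of the paper — by invoking the Jordan-structure-sensitive necessary condition for two-point interpolability established there, applied to the data $((\zt_1,F(\zt_1)),(\zt_2,F(\zt_2)))$, which is solvable since $F$ itself interpolates it. The automorphism reductions and the non-derogatory case being routine, the real work lies in this passage from the minimal-polynomial data of the single matrix $F(\zt_1)$ to an honest holomorphic family into the lower-dimensional spectral ball $\OM_d$.
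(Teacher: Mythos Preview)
Your reductions are correct up through the statement $(\star)$, but the proof then stalls at exactly the point you flag: the construction of a holomorphic family $\Phi:\Dee\to\OM_d$ with $\chi^{\Phi(\zt_1)}=p$ and $0\in\sigma(\Phi(\zt_2))$ is never carried out. Your two suggested routes to it do not work. The ``normal-families'' idea is too vague to evaluate, and in any case a local construction near $\zt_1$ gives no control at $\zt_2$. The second suggestion --- invoking the ``Jordan-structure-sensitive necessary condition for two-point interpolability established there'' --- is circular: that condition \emph{is} Theorem~\ref{T:disc}, so you cannot appeal to it in its own proof. As you yourself observe, compressions of $F(z)$ need not land in $\OM_d$ and a holomorphic degree-$d$ factor of $\chi^{F(z)}$ need not exist globally; there is no evident way to manufacture $\Phi$, and the paper does not try.

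The paper bypasses the dimension-reduction entirely by a different, and much shorter, device. Instead of applying a \emph{single} M\"obius factor $B_\mu$ (built from an eigenvalue of $W_2$) to push one eigenvalue to $0$, one applies the \emph{full} Blaschke product $B(\zt)=\prod_{\lam\in\sigma(W_1)}\bigl((\zt-\lam)/(1-\overline{\lam}\zt)\bigr)^{m(\lam)}$ built from the minimal polynomial of $W_1$, via the holomorphic functional calculus, to $F$. Since the numerator of $B$ is (up to units) the minimal polynomial of $W_1$, one gets $\widetilde B(F(\zt_1))=0$ as a \emph{matrix}. After a disc automorphism sending $\zt_1$ to $0$, factor $\widetilde B\circ F(\zt)=\zt\,\Psi(\zt)$ and apply Vesentini's theorem on the subharmonicity of $\zt\mapsto r(\Psi(\zt))$ together with the maximum principle to get $r(\Psi)\le 1$. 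This yields $|B(\mu)|\le|\zt|$ for every $\mu\in\sigma(F(\zt))$, which at $\zt=M_1(\zt_2)$ is exactly the desired inequality; in your normalisation it gives $|B(0)|=|p(0)|\le\hyper{\zt_1}{\zt_2}$ without ever leaving $\OM_n$. The missing idea, in short, is to use the minimal polynomial of $W_1$ to annihilate the \emph{matrix} $F(\zt_1)$ rather than to try to carve out a $d$-dimensional subfamily, and then to replace the determinant/Schwarz--Pick step by Vesentini's subharmonicity of the spectral radius.
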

\smallskip

Referring back to our previous paragraph: one could ask whether Theorem~\ref{T:disc} is able to
highlight any complexities of (*) that Result~\ref{R:neccCon} misses. There are two parts 
to the answer:

\begin{itemize}
\item[1)] {\em The Jordan structure of the data-set $((\zt_1,W_1),(\zt_2,W_2))$:} Several 
well-known examples from \cite{bercFoiasTann:sclt91} and \cite{aglerYoung:2psNPp00} reveal that
the existence of a $\hol(\Dee;\OM_n)$-interpolant, $n\geq 2$, is sensitive to the 
Jordan structure of the matrices $W_1,\dots,W_M$. However, to the best of our knowledge, there
are no results in the literature to date that incorporate information on the Jordan structures or 
the minimal polynomials of $W_1,\dots,W_M$. In contrast, the following example shows that
information on minimal polynomials is vital -- i.e. that with the correct information
about the minimal polynomials of $F(\zt_1)$ and $F(\zt_2)$, condition \eqref{E:SchwarzIneq}
is sharp.

\begin{example}\label{Ex:keyex}
For $n\geq 3$ and $d=2,\dots,n-1$, define the holomorphic map $\Fd:\Dee\mapp\OM_n$ by
\[
\Fd(\zt) \ := \ \begin{bmatrix}
                \ 0  & {} & {} & \zt \ &\vline & \ {} \ \\
                \ 1  & 0  & {} & 0 \ &\vline & {} \ \\
                \ {} & \ddots  & \ddots & \vdots \ &\vline & \text{\LARGE{0}} \ \\
                \ {} & {} & 1 & 0 \ &\vline & {} \ \\ \hline
                \ {} & {} & {} & {} \ &\vline & {} \ \\
                \ {} & {} & \text{\LARGE{0}} & {} \ &\vline & \zt\mathbb{I}_{n-d} \
                \end{bmatrix}_{n\times n}, \qquad \zt\in \Dee,
\]
where $\mathbb{I}_{n-d}$ denotes the identity matrix of dimension $n-d$ for $1<d<n$.
Let $\zt_1=0$ and $\zt_2=\zt$. One easily computes -- in the notation of 
Theorem~\ref{T:disc} -- that:
\begin{align} 
\max_{\mu\in\sigma(W_2)}\prod\nolimits_{\lam\in\sigma(W_1)}\mobi(\mu,\lam)^{m(\lam)} \ &= \ |\zt|,
\notag \\
\max_{\lambda\in\sigma(W_1)}\prod\nolimits_{\mu\in\sigma(W_2)}\mobi(\lam,\mu)^{m(\mu)} \ &= \ |\zt|^2,
\notag
\end{align}
where the first equality holds because $W_1$ is nilpotent of order $d$.
So, \eqref{E:SchwarzIneq} is satisfied as an equality for the given choice of $\zt_1$ and 
$\zt_2$ -- which is what was meant above by saying that \eqref{E:SchwarzIneq} {\em is sharp}.\qed
\end{example}

\item[2)] {\em Comparison with \eqref{E:necc}:} Theorem~\ref{T:disc} would not be effective
in testing any of the existing algorithms used in the implementation of the 
Bercovici-Foias-Tannenbaum solution to (*) if \eqref{E:necc} were a universally stronger
necessary condition than \eqref{E:SchwarzIneq}. However, \eqref{E:necc} is devised 
with non-derogatory data in mind, whereas no {\em simple} interpolation condition was hitherto 
known for pairs of arbitrary matrices in $\OM_n$. Hence, by choosing any one of $W_1$ and
$W_2$ to be derogatory, one would like to examine how \eqref{E:necc} and \eqref{E:SchwarzIneq}
compare. This leads to our next observation.
\end{itemize}

\begin{obs}\label{O:compare} For each $n\geq 3$, we can find a data-set 
$((\zt_1,W_1),(\zt_2,W_2))$ for which \eqref{E:SchwarzIneq} implies that it cannot admit 
any $\hol(\Dee;\OM_n)$-interpolant, whereas \eqref{E:necc} provides no information.
\end{obs}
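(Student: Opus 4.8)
The plan is to write down fully explicit data and let Theorem~\ref{T:disc} do the work, the point being that the rational function $\f(\cdot\,;W)$ appearing in \eqref{E:necc} depends on $W$ only through its characteristic polynomial and, as its defining formula makes plain, responds to $W$ roughly like a weighted average of the eigenvalues --- so it is all but blind to a cluster of repeated eigenvalues sitting next to a single ``outlier''. By contrast the left-hand side of \eqref{E:SchwarzIneq} reads off the extremal eigenvalue of $W_2$ relative to $\sigma(W_1)$ with no such averaging. I will build the counterexample out of this mismatch.

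Concretely, fix $n\geq3$, put $\zt_1=0$, let $W_1$ be the $n\times n$ zero matrix and $W_2=\mathrm{diag}(c,0,\dots,0)$ with $c\in\Dee\setminus\{0\}$ arbitrary, and leave $\zt_2=\zt$ to be chosen last. Both $W_1,W_2\in\OM_n$, and for $n\geq3$ both are derogatory (the eigenvalue $0$ has geometric multiplicity $\geq2$), so this is exactly the type of data for which \eqref{E:necc} was \emph{not} designed. The first step is the routine evaluation of the two sides: $\chi^{W_1}(z)=z^n$ forces $\f(\cdot\,;W_1)\equiv0$, whereas $\chi^{W_2}(z)=z^{n-1}(z-c)$ gives $\f(z;W_2)=-c/(n-(n-1)cz)$, and minimizing $|n-(n-1)cz|$ over $z\in\overline{\Dee}$ yields
\[
\rho\ :=\ \sup_{z\in\overline{\Dee}}\big|\f(z;W_2)\big|\ =\ \frac{|c|}{\,n-(n-1)|c|\,}.
\]
On the other side $W_2$ is diagonalizable, with minimal polynomial $z(z-c)$, so with $\sigma(W_1)=\{0\}$, $\sigma(W_2)=\{0,c\}$ and every multiplicity $m(\cdot)$ equal to $1$, the left-hand side of \eqref{E:SchwarzIneq} equals $\max\{|c|,0\}=|c|$.

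The one substantive point --- the ``averaging gap'' made precise --- is the elementary inequality $\rho<|c|$, which holds because $n-(n-1)|c|=1+(n-1)(1-|c|)>1$; this, together with the choice $\zt_1=0$ (which collapses \eqref{E:necc} into a single scalar inequality), is where the construction really lives, and I expect no obstacle beyond pairing these two features. Granting $\rho<|c|$, one finishes at once. Since $\f(\cdot\,;W_1)\equiv0$, the $2\times2$ matrix in \eqref{E:necc} has diagonal entries $1$ and $(1-|\f(z;W_2)|^2)/(1-|\zt|^2)$ --- both positive, since $|\f(z;W_2)|\leq\rho<1$ --- and off-diagonal entry $1$; being Hermitian with positive diagonal, it is positive semidefinite iff its determinant is non-negative, i.e.\ iff $|\f(z;W_2)|\leq|\zt|$, so \eqref{E:necc} holds for all $z\in\overline{\Dee}$ exactly when $\rho\leq|\zt|$. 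As $\rho<|c|<1$, I choose $\zt$ with $\rho\leq|\zt|<|c|$; then \eqref{E:necc} holds (it rules nothing out), while $|c|>|\zt|=\mobi(\zt_1,\zt_2)$ shows \eqref{E:SchwarzIneq} fails, so Theorem~\ref{T:disc} forbids a $\hol(\Dee;\OM_n)$-interpolant for $((0,W_1),(\zt,W_2))$. Since $\f$ sees only characteristic polynomials, one may, if a comparison to which Result~\ref{R:neccCon} strictly applies is wanted, replace $W_2$ by any non-derogatory matrix with characteristic polynomial $z^{n-1}(z-c)$ without changing either displayed quantity, so a full one-parameter family of counterexamples is in fact obtained for each $n\geq3$.
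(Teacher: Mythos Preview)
Your argument is correct. The computations of $\f(\cdot;W_1)\equiv 0$, of $\rho=\sup_{z\in\overline{\Dee}}|\f(z;W_2)|=|c|/(n-(n-1)|c|)$, of the left-hand side of \eqref{E:SchwarzIneq} as $|c|$, and the reduction of the $2\times 2$ positivity condition \eqref{E:necc} to $\rho\leq|\zt|$ are all accurate; the strict gap $\rho<|c|$ then lets you pick $\zt$ so that \eqref{E:necc} is satisfied while \eqref{E:SchwarzIneq} fails.

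The paper's own example (Example~\ref{Ex:Obs2}) follows the same overall template --- take $\zt_1=0$, choose $W_1$ derogatory with $\sigma(W_1)=\{0\}$ so that $\f(\cdot;W_1)\equiv 0$, compute $\sup_{z}|\f(z;W_2)|$ explicitly, show it is strictly less than the eigenvalue-based quantity on the left of \eqref{E:SchwarzIneq}, and place $|\zt_2|$ in the resulting gap --- but with different specific data: there $n=2m$, $W_1$ is a block-diagonal pair of $m\times m$ nilpotents of order $m$, and $W_2$ is the (non-derogatory) companion matrix of $z^{2m}-\alpha z^m$, giving $\sup_z|\f(z;W_2)|=|\alpha|/(2-|\alpha|)<|\alpha|$. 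Your choice $W_1=0$, $W_2=\mathrm{diag}(c,0,\dots,0)$ is more economical and, in fact, covers every $n\geq 3$ directly, whereas the paper's written example treats only even $n\geq 4$. One small caveat on your closing remark: replacing $W_2$ by a non-derogatory matrix with the same characteristic polynomial does leave both displayed quantities unchanged, as you say, but $W_1=0$ remains derogatory, so Result~\ref{R:neccCon} still does not ``strictly apply''; this, however, is exactly the situation in the paper's example as well, and is the intended point of the observation.
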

\smallskip

An example pertinent to this observation is presented at the end of 
Section~\ref{S:proofDisc}. As for Theorem~\ref{T:disc}, it may be viewed as a 
Schwarz lemma for mappings between $\Dee$ and the spectral unit ball. Note that the
inequality \eqref{E:SchwarzIneq} is preserved under automorphisms of $\Dee$ and under
the ``obvious'' automorphisms of $\OM_n$ (the full automorphism group 
${\sf Aut}(\OM_n), \ n\geq 2$, is not known). The proof of Theorem~\ref{T:disc} is
presented in Section~\ref{S:proofDisc}.
\smallskip

The key new idea in the proof of Theorem~\ref{T:disc} -- i.e.~to focus on the minimal
polynomial of certain crucial matrices that lie in the range of $F$ -- pays off in 
obtaining a result that is somewhat removed from the our main theme. The result in question
is a generalisation of the following theorem of Ransford and White
\cite[Theorem~2]{ransfordWhite:hsmsub91}:
\begin{equation}\label{E:rnsfrdWht}
G\in\hol(\OM_n;\OM_n) \ \text{and} \ G(0)=0 \ \impl \ r(G(X))\leq r(X) \;\; \forall X\in\OM_n.
\end{equation}
One would like to generalise \eqref{E:rnsfrdWht} in the way the Schwarz-Pick lemma generalises
the Schwarz lemma for $\Dee$ -- i.e. by formulating an inequality that is valid without
assuming that the holomorphic mapping in question has a fixed point. This generalisation is as 
follows:

\begin{theorem}\label{T:spec} Let $G\in\hol(\OM_n;\OM_n), \ n\geq 2$, and define $d_G:=$ the
degree of the minimal polynomial of $G(0)$. Then:
\begin{equation}\label{E:growthBound}
r(G(X)) \ \leq \ \frac{r(X)^{1/d_G}+r(G(0))}{1+r(G(0))r(X)^{1/d_G}} \quad\forall X\in\OM_n.
\end{equation}
Furthermore, the inequality \eqref{E:growthBound} is sharp in the sense that there exists a
non-empty set $\exep_n\subset\OM_n$ such that given any $A\in\exep_n$ and $d=1,\dots,n$, we can
find a $\shrp\in\hol(\OM_n;\OM_n)$ such that
\begin{align}
d_{\shrp} \ &= \ d, \;\; \text{and}\notag\\
r(\shrp(A)) \ &= \ \frac{r(A)^{1/d}+r(\shrp(0))}{1+r(\shrp(0))r(A)^{1/d}}. \label{E:sharp}
\end{align}
\end{theorem}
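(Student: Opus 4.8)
The plan is to deduce \eqref{E:growthBound} from the two-point estimate of Theorem~\ref{T:disc} applied along a one-dimensional complex disc through $0\in\OM_n$, and then to establish sharpness by pushing the extremal family $\Fd$ of Example~\ref{Ex:keyex} through a scalar-valued holomorphic function on $\OM_n$ and a M\"obius automorphism of $\Dee$. First I would clear away the degenerate cases. If $r(X)=0$ then $\zt X\in\OM_n$ for \emph{every} $\zt\in\cplx$, so $\zt\mapsto G(\zt X)$ is an entire map; the coefficients of the characteristic polynomial of $G(\zt X)$ are bounded holomorphic functions of $\zt$ (the $j$-th coefficient of the characteristic polynomial of any $W\in\OM_n$ has modulus at most $\binom{n}{j}$, since its roots lie in $\Dee$), hence constant by Liouville, so $\sigma(G(X))=\sigma(G(0))$ and $r(G(X))=r(G(0))$, which is precisely the right-hand side of \eqref{E:growthBound} when $r(X)=0$. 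Moreover $t\mapsto (t+r(G(0)))/(1+r(G(0))t)$ is increasing on $[0,1)$ with value $r(G(0))$ at $t=0$, so the right-hand side of \eqref{E:growthBound} always dominates $r(G(0))$; hence \eqref{E:growthBound} is immediate when $r(G(X))\leq r(G(0))$. From now on I assume $r(X)>0$ and $r(G(X))>r(G(0))$.

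Next, set $t:=r(X)\in(0,1)$ and define $F\in\hol(\Dee;\OM_n)$ by $F(\zt):=G(\zt X/t)$; this makes sense since $r(\zt X/t)=|\zt|<1$ on $\Dee$. Then $F(0)=G(0)=:W_1$ and $F(t)=G(X)=:W_2$. Applying Theorem~\ref{T:disc} with $\zt_1=0$ and $\zt_2=t$, and keeping only the first term of the maximum on the left of \eqref{E:SchwarzIneq}, I obtain
\[
\max_{\mu\in\sigma(W_2)}\ \prod_{\lam\in\sigma(W_1)}\mobi(\mu,\lam)^{m(\lam)}\ \leq\ \mobi(0,t)\ =\ r(X),
\]
where $m(\lam)$ is the multiplicity of $\lam$ in the minimal polynomial of $W_1=G(0)$, so that $\sum_{\lam\in\sigma(W_1)}m(\lam)=d_G$. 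Now pick $\mu\in\sigma(W_2)$ with $|\mu|=r(G(X))$. Every $\lam\in\sigma(W_1)$ has $|\lam|\leq r(G(0))<|\mu|$, and an elementary one-variable computation — minimize $|(\zt-\lam)(1-\overline{\lam}\zt)^{-1}|$ over the circle $|\zt|=|\mu|$, then over $|\lam|\leq r(G(0))$ — gives $\mobi(\mu,\lam)\geq (|\mu|-r(G(0)))/(1-|\mu|r(G(0)))$. Feeding this into the displayed bound and using $\sum m(\lam)=d_G$ yields $(|\mu|-r(G(0)))/(1-|\mu|r(G(0)))\leq r(X)^{1/d_G}$; solving this linear inequality for $|\mu|=r(G(X))$ is exactly \eqref{E:growthBound}. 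In particular, when $G(0)=0$ we have $d_G=1$ and this recovers \eqref{E:rnsfrdWht}.

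For the sharpness statement I would take $\exep_n:=\{a\mathbb{I}:a\in\Dee\}$ (or, more generally, all matrices in $\OM_n$ with a single eigenvalue) together with the scalar holomorphic function $\psi(X):=n^{-1}\tr(X)$ on $\OM_n$, which satisfies $|\psi(X)|\leq r(X)<1$ and $\psi(a\mathbb{I})=a$. I extend the family $\Fd$ of Example~\ref{Ex:keyex} to $d=1$ (with $F_1(\zt)=\zt\mathbb{I}$) and to $d=n$ (with $\Fen(\zt)$ the companion matrix of $z^n-\zt$). Given $A=a\mathbb{I}\in\exep_n$ and $d\in\{1,\dots,n\}$, I fix $b\in\Dee$ with $\arg b=(\arg a)/d$ and arbitrary modulus in $[0,1)$, let $m_b(\zt):=(\zt+b)(1+\overline{b}\zt)^{-1}$ act on matrices via the holomorphic functional calculus, and put $\shrp:=m_b\circ\Fd\circ\psi\in\hol(\OM_n;\OM_n)$. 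Then $\shrp(0)=m_b(\Fd(0))$ is similar to $J_d(b)\oplus b\,\mathbb{I}_{n-d}$, because $m_b'(0)\neq 0$ carries the nilpotent $d\times d$ block of $\Fd(0)$ to a single Jordan block with eigenvalue $b$; hence its minimal polynomial is $(z-b)^d$, so $d_{\shrp}=d$ and $r(\shrp(0))=|b|$. Finally, the eigenvalues of $\Fd(a)$ are the $d$-th roots of $a$ together with $a$ (which has modulus at most $|a|^{1/d}$), and the choice of $\arg b$ puts one $d$-th root of $a$ on the ray through $b$, so $r(\shrp(A))=\max_{\omega^d=a}|m_b(\omega)|=(|a|^{1/d}+|b|)/(1+|b|\,|a|^{1/d})$, which is the right-hand side of \eqref{E:growthBound} at $X=A$.

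The main obstacle is setting up the reduction so that Theorem~\ref{T:disc} produces \emph{this} bound: one must recognize that the degree appearing is that of the minimal polynomial of $G(0)$, not of $G(X)$ — this is forced by the choice $W_1=G(0)$ and the summation of the exponents $m(\lam)$ over $\sigma(W_1)$ — and one must orient the pseudohyperbolic inequalities so as to get a \emph{lower} bound on each $\mobi(\mu,\lam)$ in terms of $r(G(X))$ and $r(G(0))$, which is precisely what dictates the reduction to $r(G(X))>r(G(0))$. The remaining ingredients — the Liouville argument for nilpotent $X$, the one-variable optimization of $|m_b|$ on circles, and the Jordan-structure bookkeeping for $\shrp(0)$ in the range $1<d<n$ — are routine once these two points are in place.
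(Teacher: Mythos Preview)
Your argument is correct, and it reaches the same intermediate inequality as the paper, but by a different route. For \eqref{E:growthBound} the paper does not slice $\OM_n$ by the disc $\zt\mapsto G(\zt X/r(X))$ and invoke Theorem~\ref{T:disc}; instead it forms the Blaschke product $B_G$ attached to the minimal polynomial of $G(0)$, observes that $H:=B_G\circ G\in\hol(\OM_n;\OM_n)$ with $H(0)=0$, and applies the Ransford--White inequality \eqref{E:rnsfrdWht} to $H$ directly. Both approaches land on exactly the same estimate
\[
\prod_{\lam\in\sigma(G(0))}\mobi(\mu,\lam)^{m(\lam)}\ \leq\ r(X)\qquad\forall\,\mu\in\sigma(G(X)),
\]
and both finish with the same one-variable optimization (recorded in the paper as Lemma~\ref{L:mobiusTrans}) together with the monotonicity of $t\mapsto(r(X)^{1/d_G}+t)/(1+r(X)^{1/d_G}t)$. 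The paper's route handles all $X$ uniformly, so no Liouville detour for nilpotent $X$ and no preliminary reduction to $r(G(X))>r(G(0))$ is needed; your route, on the other hand, has the conceptual virtue of exhibiting \eqref{E:growthBound} as a corollary of Theorem~\ref{T:disc}.

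For sharpness the paper makes the simpler choice $b=0$: it takes $\exep_n$ to be all matrices with a single eigenvalue and sets $\Shrp=\Fd\circ\psi$ (in your notation, with $\Fd$ extended to $d=1,n$ as you describe), so that $\Shrp(0)$ is nilpotent of order $d$, $r(\Shrp(0))=0$, and \eqref{E:sharp} reduces to $r(\Shrp(A))=r(A)^{1/d}$. Your M\"obius-twisted family $m_b\circ\Fd\circ\psi$ is also correct and in fact yields a stronger sharpness statement (equality with $r(\shrp(0))=|b|$ prescribed arbitrarily in $[0,1)$), but this extra generality is not demanded by the theorem as stated.
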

\medskip

\section{A Discussion of Observation~\ref{O:wontDo}}\label{S:MainObs}

We begin this discussion with a couple of definitions from complex geometry. Given a domain
$\OM\subset\Cn$, the {\em Carath{\'e}odory pseudodistance} between two points 
$z_1,z_2\in\OM$ is defined as
\[
c_\OM(z_1,z_2) \ := \ \sup\left\{\poinc(f(z_1),f(z_2)):f\in\hol(\OM;\Dee)\right\},
\]
where $\poinc$ is the Poincar{\'e} distance on $\Dee$ (and $\poinc$ is 
given by $\poinc(\zt_1,\zt_2)={\rm tanh}^{-1}(\mobi(\zt_1,\zt_2))$ for $\zt_1,\zt_2\in\Dee$).
In the same setting, the {\em Lempert functional} on $\OM\times\OM$, is defined as
\begin{multline}
\Lem{\OM}(z_1,z_2) \\ 
:= \ \inf\left\{\poinc(\zt_1,\zt_2):\exists\psi\in\hol(\Dee;\OM) \ 
\text{and $\zt_1,\zt_2\in\Dee$ such that $\psi(\zt_j)=z_j, \ j=1,2.$}\right\}.
\end{multline}
It is not hard to show that the set on the right-hand side above is non-empty. The reader
is referred to Chapter~III of \cite{jarnickiPflug:idmca93} for details. Next, we examine a few 
technical objects. For the remainder of this section, $S=(s_1,\dots,s_n)$ will denote a point in
$\Cn, \ n\geq 2$. For $z\in\cDee$ define the rational map
$f_n(z;S):=(\ess_1(z;S),\dots,\ess_{n-1}(z;S)), \ n\geq 2$, by
\begin{align}
\ess_j(z;S) \ := \ \frac{(n-j)s_j-z(j+1)s_{j+1}}{n-zs_1}, \quad S&\in\Cn \ \text{s.t.} \ 
									n-zs_1\neq 0,\notag \\
j&=1,\dots,(n-1).\notag
\end{align}
Next, define
\[
F(Z;\bcdot) \ := \ f_2(z_1;\bcdot)\circ\dots\circ f_n(z_{n-1};\bcdot) 
\quad\forall Z=(z_1,\dots,z_{n-1})\in\cDee^{n-1},
\]
where the second argument varies through that region in $\Cn$ where the right-hand side above 
is defined. The connection of these objects with our earlier discussions is established via
\[
f(z;S)\ := \ \frac{\sum_{j=1}^njs_j(-1)^jz^{j-1}}
                                {\sum_{j=0}^{n-1}(n-j)s_j(-1)^jz^{j}}, \quad z\in\cDee,
\]
and $S$ varies through that region in $\Cn$ where the right-hand side above is defined.
Note the resemblance of $f(z;S)$ to $\f(z;W)$ defined earlier. From Theorem~3.5 of
\cite{costara:osNPp05}, we excerpt:

\begin{result}\label{R:costaraKey} Let $S=(s_1,\dots,s_n)$ denote a point in $\Cn$. Then:
\begin{enumerate}
\item[1)] $f(z;\bcdot)=F(z,\dots,z;\bcdot) \ \forall z\in\cDee$, wherever defined.
\item[2)] $S\in G_n$ if and only if $\sup_{z\in\cDee}|f(z;S)|<1$, $n\geq 2$.
\item[3)] If $S\in G_n, \ n\geq 2$, then
\[
\sup_{z\in\cDee}|f(z;S)| \ = \ \sup_{Z\in\cDee^{n-1}}|F(Z;S)|.
\]
\end{enumerate}
\end{result}
\smallskip

For convenience, let us refer to the Carath{\'e}odory pseudodistance on $G_n, \ n\geq 2$, 
by $c_n$. Next, define -- here we refer to Section~2 of \cite{nikPflThoZwo:ecmsp07} -- the 
following distance function on $G_n$ 
\begin{equation}\label{E:nptzKey}
\symdist(S,T) \ := \ \max_{Z\in(\bdy)^{n-1}}\poinc(F(Z;S),F(Z;T)) \quad\forall S,T\in G_n.
\end{equation}
This is the distance function -- whose properties have been studied in 
\cite{nikPflThoZwo:ecmsp07} -- we shall exploit to support Observation~\ref{O:wontDo}.
The well-definedness of the right-hand side above follows from parts (2) and (3) of
Result~\ref{R:costaraKey} above. Furthermore, since $F(Z;S),F(Z;T)\in\Dee$ for each $Z\in\cDee^{n-1}$
whenever $S,T\in G_n, \ n\geq 2$, it follows simply from the definition that
\begin{equation}\label{E:ineq1}
c_n(S,T) \ \geq \ \symdist(S,T) \quad\forall S,T\in G_n.
\end{equation}
Since we have now adopted certain notations from \cite{nikPflThoZwo:ecmsp07}, we must make the
following
\smallskip

\noindent{{\bf Note.} {\em We have opted to rely on the notation of \cite{costara:osNPp05}. 
This leads to a slight discrepancy between our definition of $\symdist$ in \eqref{E:nptzKey}
and that in \cite{nikPflThoZwo:ecmsp07}. This discrepancy is easily reconciled by the
observation that $F(\bcdot;S)$ used here and in \cite{costara:osNPp05} will have to be
read as $F(\bcdot;-s_1,s_2,\dots,(-1)^ns_n)$ in \cite{nikPflThoZwo:ecmsp07}. This is
harmless because $S\in G_n\iff (-s_1,s_2,\dots,(-1)^ns_n)\in G_n$.}
\smallskip

Let us now refer back to the condition \eqref{E:necc} with $M=2$. An easy calculation involving
$2\times 2$ matrices reveals that
\[
\text{\em When $M=2$, \eqref{E:necc}}\iff \ \sup_{z\in\cDee}
\left|\frac{\f(z;W_1)-\f(z;W_2)}{1-\overline{\f(z};W_2)\f(z;W_1)}\right|\leq \hyper{\zt_1}{\zt_2}.
\]
If $W_1$ is nilpotent of order $n$ (recall that all matrices occuring in 
\eqref{E:necc} are non-derogatory), then $\f(\bcdot;W_1)\equiv 0$. Of course, $W_2\in\OM_n$
implies that $(s_1(W_2),\dots,s_n(W_2))\in G_n$. By part (2) of Result~\ref{R:costaraKey}, 
$\f(z;W_2)\in\Dee \ \forall z\in\cDee$. This leads to the following key fact:
\begin{multline}\label{E:ineq2}
\text{\em When $M=2$ and $W_1$ is nilpotent of order $n$}, \\
\eqref{E:necc} \ \iff \ \sup_{z\in\cDee}{\rm tanh}^{-1}|\f(z;W_2)|=\sup_{z\in\cDee}\poinc(0,\f(z;W_2))
\leq \poinc(\zt_1,\zt_2).
\end{multline}
\smallskip

We now appeal to Proposition~2 in \cite{nikPflThoZwo:ecmsp07}, i.e. 
$\symdist(0,\bcdot)\neq c_n(0,\bcdot)$ for each $n\geq 3$. Let us now fix $n\geq 3$.
Let $S_0\in G_n\setminus\{0\}$ be such that $c_n(0,S_0)>\symdist(0,S_0)$. Let $\eps_0>0$
be such that $c_n(0,S_0)=\symdist(0,S_0)+2\eps_0$. Let us write
$S_0=(s_{0,1},\dots,s_{0,n})$ and choose two matrices $W_1,W_2\in\OM_n$ as follows:
\[
W_1=\text{\em a nilpotent of order $n$,}\quad
W_2=\begin{bmatrix}
                \ 0  & {} & {} & (-1)^{n-1}s_{0,n} \ \\
                \ 1  & 0  & {} & (-1)^{n-2}s_{0,n-1} \ \\
                \ {} & \ddots  & \quad \ddots & \vdots\qquad \ \\
                \ \text{\LARGE{0}} & {}  &  1  & s_{0,1}\qquad \
                \end{bmatrix}_{n\times n},
\]
i.e. $W_2$ is the companion matrix of the polynomial $z^n+\sum_{j=1}^n(-1)^js_{0,j}z^{n-j}$.
We emphasize the following facts that follow from this choice of $W_1$ and $W_2$ 
\begin{align}
\f(\bcdot,W_1) &= f(\bcdot;0,\dots,0) \equiv 0, \; \; \f(\bcdot,W_2) = 
f(\bcdot;S_0), \label{E:equiv} \\
\text{\em $W_1$ and $W_2$} \ &\text{\em are, by construction, non-derogatory.}\notag
\end{align}
The relations in \eqref{E:equiv} are cases of a general
correspondence between matrices in $\OM_n$ and points in $G_n$ , given
by the surjective, holomorphic map $\Pi_n:\OM_n\longrightarrow G_n$, where
\[
\Pi_n(W) \ := \ (s_1(W),\dots,s_n(W)),
\]
and $s_j(W), \ j=1,\dots n$, are as defined in the beginning of this article.
\smallskip

Let us pick two distinct points $\zt_1,\zt_2\in\Dee$ such that
\begin{equation}\label{E:choice}
\poinc(\zt_1,\zt_2)-\eps_0 \ < \ \symdist(0,S_0) \ \leq \ \poinc(\zt_1,\zt_2).
\end{equation}
Assume, now, that \eqref{E:necc} is a {\em sufficient condition} for the existence
of a $\hol(\Dee;\OM_n)$-interpolant. Then, in view of the choices of $W_1,W_2$,
the second inequality in \eqref{E:choice}, and \eqref{E:equiv} we get
\begin{equation}\label{E:ineq3}
\sup_{z\in\cDee}{\rm tanh}^{-1}|\f(z;W_2)| \ = \ 
\sup_{z\in\bdy}{\rm tanh}^{-1}|\f(z;W_2)| \ = \ \symdist(0,S_0) \ \leq \ \poinc(\zt_1,\zt_2).
\end{equation}
The first equality in \eqref{E:ineq3} is a consequence of part~(2) of 
Result~\ref{R:costaraKey}: since $S_0\in G_n$, the rational function 
$\f(\bcdot;W_2)=f(\bcdot;S_0)\in\hol(\Dee)\bigcup\smoo(\cDee)$, whence the equality
follows from the Maximum Modulus Theorem. But now, owing to the equivalence
\eqref{E:ineq2}, the estimate \eqref{E:ineq3} implies, by assumption, that
there exists an interpolant $F\in\hol(\Dee;\OM_n)$ such that $F(\zt_j)=W_j, \ j=1,2.$
Then, $\Pi_n\circ F:\Dee\longrightarrow G_n$ satisfies $\Pi_n\circ F(\zt_1)=0$
and $\Pi_n\circ F(\zt_2)=S_0$. Then, by the definition of the Lempert functional
(for convenience, we denote the Lempert functional of $G_n$ by $\Lem{n}$)
\begin{align}
\Lem{n}(0,S_0) \ \leq \ \poinc(\zt_1,\zt_2) \ &< \ \symdist(0,S_0)+\eps_0 
					&&\text{(from \eqref{E:choice}, 1st part)}\notag \\
		&< \ c_n(0,S_0). &&\text{(by definition of $\eps_0$)}\notag
\end{align}
But, for any domain $\OM$, the Carath{\'e}odory pseudodistance and the Lempert function
always satisfy $c_\OM\leq\Lem{\OM}$. Hence, we have just obtained a contradiction.
Hence our assumption that \eqref{E:necc} is sufficient for the existence of an
$\hol(\Dee,\OM_n)$-interpolation, for $n\geq 3$, must be false. 
\medskip 

\section{The Proof of Theorem~\ref{T:disc}}\label{S:proofDisc}

The proofs in this section depend crucially on a theorem by Vesentini. The result is as follows:

\begin{result}[Vesentini, \cite{vesentini:spr68}]\label{R:vesentini}
Let $\banal$ be a complex, unital Banach
algebra and let $r(x)$ denote the spectral radius of any element $x\in\banal$.
Let $f\in\hol(\Dee;\banal)$. Then, the function $\zt\longmapsto r(f(\zt))$ is
subharmonic on $\Dee$.
\end{result}
\smallskip

The following result is the key lemma of this section. The proof of Theorem~\ref{T:disc} is reduced 
to a simple application of this lemma. The structure of this proof is reminiscent of 
\cite[Theorem~1.1]{nokraneRansford:Slam01}. This stems from the manner in which Vesentini's
theorem is used. The essence of the trick below goes back to Globevnik \cite{globevnik:Slsr74}. The 
reader will notice that Theorem~\ref{T:disc} specialises to Globevnik's Schwarz lemma when $W_1=0$.
\smallskip

\begin{lemma}\label{L:key} Let $F\in\hol(\Dee;\OM_n)$. For each $\lam\in\sigma(F(0))$,
define $m(\lam):=$the multiplicity of $\lam$ as a zero of the
minimal polynomial of $F(0)$. Define the Blaschke product
\[
B(\zt) \ := \ \prod_{\lam\in\sigma(F(0))}\blah{\zt}{\lam}^{m(\lam)}, \quad\zt\in D.
\]
Then $|B(\mu)|\leq|\zt| \ \forall\mu\in\sigma(F(\zt))$.
\end{lemma}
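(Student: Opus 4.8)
The natural approach is to pass from the matrix-valued problem to a scalar one via the minimal polynomial, apply Vesentini's theorem (Result~\ref{R:vesentini}) to obtain a subharmonic function on $\Dee$, and then invoke the classical Schwarz--Pick argument for subharmonic functions (the Globevnik trick). First I would set up the scalar object: let $p(\zt,\cdot)$ be the (monic) minimal polynomial of $F(0)$, of degree $d=\sum_{\lam\in\sigma(F(0))}m(\lam)$, and consider the holomorphic Banach-algebra-valued map $\zt\longmapsto p(F(\zt))\in M_n(\cplx)$ --- more precisely, apply the polynomial $p$ (with its \emph{fixed} coefficients, those of the minimal polynomial of $F(0)$) to the matrix $F(\zt)$. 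At $\zt=0$ this is $p(F(0))=0$ by definition of the minimal polynomial.

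**Key steps.** By Result~\ref{R:vesentini}, the function $u(\zt):=r(p(F(\zt)))$ is subharmonic on $\Dee$; it is bounded above because $F$ maps into $\OM_n$ (so the eigenvalues of $F(\zt)$ stay in a compact subset of $\Dee$, whence $p(F(\zt))$ stays bounded in operator norm, hence so does its spectral radius). Moreover $u(0)=0$. The crucial observation linking $u$ to the Blaschke product $B$ is the spectral mapping theorem: the eigenvalues of $p(F(\zt))$ are exactly $\{p(\mu):\mu\in\sigma(F(\zt))\}$, so
\[
r(p(F(\zt))) \ = \ \max_{\mu\in\sigma(F(\zt))}\,|p(\mu)| \ = \ \max_{\mu\in\sigma(F(\zt))}\prod_{\lam\in\sigma(F(0))}|\mu-\lam|^{m(\lam)}.
\]
Now I would divide through to manufacture the Blaschke product: set
\[
v(\zt) \ := \ \max_{\mu\in\sigma(F(\zt))}\prod_{\lam\in\sigma(F(0))}\hyper{\mu}{\lam}^{m(\lam)} \ = \ \max_{\mu\in\sigma(F(\zt))}\frac{|p(\mu)|}{\prod_{\lam}|1-\overline{\lam}\mu|^{m(\lam)}}.
\]
The denominators $\prod_\lam|1-\overline{\lam}\mu|^{m(\lam)}$ are bounded and bounded away from $0$ uniformly as $\mu$ ranges over $\cDee$ and the $\lam$ over the fixed finite set $\sigma(F(0))\subset\Dee$; hence $v$ differs from $u$ only by such a bounded factor. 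To get subharmonicity of (something dominating) $v$, I would instead argue directly: for each fixed automorphism factor, $\zt\longmapsto r\!\left(B(\zt)^{-1}p(F(\zt))\right)$ --- or rather, work on a slightly shrunk disc and take a limit --- is subharmonic by Vesentini applied to the holomorphic $M_n(\cplx)$-valued map $\zt\mapsto \prod_\lam (1-\overline{\lam}F(\zt))^{-m(\lam)}\,p(F(\zt))$ (well-defined since $\sigma(F(\zt))\subset\Dee$ forces $1-\overline{\lam}F(\zt)$ invertible). Its spectral radius at a point $\zt$ equals $\max_{\mu\in\sigma(F(\zt))}|p(\mu)|/|1-\overline{\lam}\mu|^{m(\lam)}$... which is not quite $v(\zt)$ because the $\lam$ in $B$ and the $\lam$ inside the inverse must match; one checks they do, giving exactly $v(\zt)$. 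This function is subharmonic, vanishes at $\zt=0$, and is bounded above by $1$ on $\Dee$ (since $|B(\mu)|<1$ for $\mu\in\Dee$ and $v\le\sup|B|$ on $\cDee$ which is $\le 1$ by the maximum principle for the finite Blaschke product $B$).

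**Conclusion and main obstacle.** Having a subharmonic function $v\geq 0$ on $\Dee$ with $v\le 1$ and $v(0)=0$, the Schwarz lemma for subharmonic functions (via the Poisson--Jensen/harmonic majorant argument, or by applying the ordinary Schwarz lemma to the harmonic majorant) yields $v(\zt)\le|\zt|$ for all $\zt\in\Dee$, which is precisely the claimed $|B(\mu)|\le|\zt|$ for all $\mu\in\sigma(F(\zt))$. I expect the main obstacle to be the bookkeeping that makes the spectral radius of the holomorphic matrix function $\prod_\lam(1-\overline{\lam}F(\zt))^{-m(\lam)}p(F(\zt))$ come out \emph{exactly} equal to $\max_{\mu\in\sigma(F(\zt))}|B(\mu)|$ --- one must verify that applying the rational function $z\mapsto p(z)\prod_\lam(1-\overline{\lam}z)^{-m(\lam)}$ to the matrix $F(\zt)$ commutes with the spectral mapping theorem (valid since this rational function is holomorphic on a neighborhood of $\sigma(F(\zt))$), and that the factors combine to the Blaschke product $B$. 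A secondary technical point is justifying the bounded-above hypothesis for Vesentini and handling the subharmonic Schwarz lemma cleanly when $v$ is only upper semicontinuous (a standard approximation by harmonic majorants on slightly smaller discs suffices).
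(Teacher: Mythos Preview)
Your construction of the matrix function $\widetilde{B}(F(\zt)):=\prod_{\lam}(\id-\overline{\lam}F(\zt))^{-m(\lam)}\,p(F(\zt))$ and the identification of its spectral radius as $\max_{\mu\in\sigma(F(\zt))}|B(\mu)|$ via the spectral mapping theorem match the paper's approach exactly. The gap is in your final step: the ``Schwarz lemma for subharmonic functions'' you invoke is false as stated. A subharmonic $v$ on $\Dee$ with $0\le v\le 1$ and $v(0)=0$ need \emph{not} satisfy $v(\zt)\le|\zt|$; take $v(\zt)=|\zt|^{1/2}$, which is subharmonic ($\Delta v=\tfrac{1}{4}|\zt|^{-3/2}>0$ away from $0$), vanishes at $0$, is bounded by $1$, yet exceeds $|\zt|$ throughout $\Dee\setminus\{0\}$. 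Neither a harmonic-majorant nor a Poisson--Jensen argument rescues this, since the least harmonic majorant of such a $v$ need not vanish at the origin. Note moreover that $|\zt|^{1/2}$ \emph{is} of the form $r(g(\zt))$ for a holomorphic $g:\Dee\to M_2(\cplx)$ (e.g.\ $g(\zt)=\left(\begin{smallmatrix}0&1\\ \zt&0\end{smallmatrix}\right)$), so subharmonicity via Vesentini together with $v(0)=0$ is genuinely insufficient.

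What you are not using is that the \emph{matrix} $\widetilde{B}(F(0))$ --- not merely its spectral radius --- equals $0$, since $p$ is the minimal polynomial of $F(0)$. The paper exploits exactly this: because $\zt\mapsto\widetilde{B}(F(\zt))$ is holomorphic into $M_n(\cplx)$ and vanishes at $0$, one factors $\widetilde{B}(F(\zt))=\zt\,\Phi(\zt)$ with $\Phi\in\hol(\Dee;M_n(\cplx))$, and applies Vesentini to $\Phi$ rather than to $\widetilde{B}\circ F$. Then $r(\Phi(\zt))$ is subharmonic, the bound $r(\widetilde{B}(F(\zt)))<1$ gives $r(\Phi(\zt))<1/R$ on $|\zt|=R$, and the maximum principle followed by $R\to1^-$ yields $r(\Phi(\zt))\le 1$. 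Hence $|B(\mu)|\le r(\widetilde{B}(F(\zt)))=|\zt|\,r(\Phi(\zt))\le|\zt|$. This factorisation \emph{is} the Globevnik trick you allude to, but it must be carried out at the level of the holomorphic matrix-valued map, not at the level of its subharmonic spectral radius.
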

\begin{proof}
The Blaschke product $B$ induces a matrix function $\widetilde{B}$ on $\OM_n$: for any matrix
$A\in\OM_n$, we set
\[
\widetilde{B}(A) \ := \ \prod_{\lam\in\sigma(F(0))}(\id-\overline{\lam} A)^{-m(\lam)}
(A-\lam\id)^{m(\lam)},
\]
which is well-defined on $\OM_n$ because whenever $\lam\neq 0$,
\[
(\id-\overline{\lam}A) \ = \ \overline{\lam}(\id/\overline{\lam}-A)\in GL(n,\cplx).
\]
Furthermore, since $\zt\longmapsto(\zt-\lam)/(1-\overline{\lam}\zt)$, $|\lam|<1$, has a 
power-series expansion that converges uniformly on  compact subsets of $\Dee$, it follows from 
standard arguments that
\begin{equation}\label{E:specB}
\sigma(\widetilde{B}(A)) \ = \ \{B(\mu):\mu\in\sigma(A)\}\quad\text{for any $A\in\OM_n$.}
\end{equation}
By the definition of the minimal polynomial, $\widetilde{B}\circ F(0)=0$. 
Since $\widetilde{B}\circ F(0)=0$, there exists a holomorphic map 
$\Phi\in\hol(\Dee;M_n(\cplx))$ such that $\widetilde{B}\circ F(\zt)=\zt\Phi(\zt)$.
Note that
\begin{equation}\label{E:specRelation}
\sigma(\widetilde{B}\circ F(\zt)) \ = \ \sigma(\zt\Phi(\zt)) \ = \
\zt\sigma(\Phi(\zt))\quad\forall\zt\in\Dee.
\end{equation}
Since $\sigma(\widetilde{B}\circ F(\zt))\subset\Dee$, the above equations give us:
\begin{equation}\label{E:circBound}
r(\Phi(\zt)) \ < \ 1/R \quad\forall\zt:|\zt|=R, \ R\in(0,1).
\end{equation}
Taking $\banal=M_n(\cplx)$ in Vesentini's theorem, we see that
$\zt\longmapsto r(\Phi(\zt))$ is subharmonic on the unit disc. Applying the Maximum Principle to
\eqref{E:circBound} and taking limits as $R\longrightarrow 1^-$, we get
\begin{equation}\label{E:oneBound}
r(\Phi(\zt)) \ \leq \ 1\quad\forall\zt\in\Dee.
\end{equation}
In view of \eqref{E:specB}, \eqref{E:specRelation} and \eqref{E:oneBound}, we get
\[
|B(\mu)| \ \leq \ |\zt|r(\Phi(\zt)) \ \leq \ |\zt|\quad\forall\mu\in\sigma(F(\zt)).
\]
\end{proof}

We are now in a position to provide

\begin{custom}\begin{proof}[{\bf The proof of Theorem~\ref{T:disc}.}] Define the
disc automorphisms
\[
M_j(\zt) \ := \ \frac{\zt-\zt_j}{1-\overline{\zt_j}\zt}, \quad j=1,2,
\]
and write $\Phi_j=F\circ M_j^{-1}, \ j=1,2$. Note that $\Phi_1(0)=W_1$. 
For $\lam\in\sigma(W_1)$, let $m(\lam)$ be as stated in the theorem.
Define the Blaschke product
\[
B_1(\zt) \ := \ \prod_{\lam\in\sigma(W_1)}\blah{\zt}{\lam}^{m(\lam)}, \quad\zt\in\Dee.
\]
Applying Lemma~\ref{L:key}, we get
\begin{align}
\hyper{\zt_1}{\zt_2} \ = \ |M_1(\zt_2)| \ &\geq \
\prod_{\lam\in\sigma(W_1)}\hyper{\mu}{\lam}^{m(\lam)} \notag \\
&= \ \prod_{\lam\in\sigma(W_1)}\mobi(\mu,\lam)^{m(\lam)}
 \quad\forall\mu\in\sigma(\Phi_1(M_1(\zt_2)))=\sigma(W_2).\label{E:1stpart}
\end{align}
Now, swapping the roles of $\zt_1$ and $\zt_2$ and applying the same argument to
\[
B_2(\zt) \ := \ \prod_{\mu\in\sigma(W_2)}\blah{\zt}{\mu}^{m(\mu)}, \quad\zt\in\Dee,
\]
we get
\begin{equation}\label{E:2ndpart}
\hyper{\zt_1}{\zt_2} \ \geq \ \prod_{\mu\in\sigma(W_2)}\mobi(\lam,\mu)^{m(\mu)}
\quad\forall\lam\in\sigma(W_1). \end{equation}
Combining \eqref{E:1stpart} and \eqref{E:2ndpart}, we get 
\[
\max\left\{\max_{\mu\in\sigma(W_2)}\prod_{\lam\in\sigma(W_1)}\mobi(\mu,\lam)^{m(\lam)},
\ \max_{\lambda\in\sigma(W_1)}\prod_{\mu\in\sigma(W_2)}\mobi(\lam,\mu)^{m(\mu)}\right\} \
\leq \ \hyper{\zt_1}{\zt_2}.
\]
\end{proof}
\end{custom}

We conclude this section with an example.
\smallskip

\begin{example}\label{Ex:Obs2} {\em An illustration of Observation~\ref{O:compare}}
\smallskip

We begin by pointing out that the phenomenon below is expected for $n=2$. We want 
to consider $n>2$ and show that {\em there is no interpolant for the following data}, 
but that this cannot be inferred from \eqref{E:necc}. First
the matricial data: let $n=2m, \ m\geq 2$, and let
\begin{align}
W_1 \ &= \ \text{\em any block-diagonal matrix with two $m\times m$-blocks that}\notag \\
      &\qquad\text{\em are each nilpotent of order $m$.}
\end{align}
Next, for an $\alpha\in\Dee$, $\alpha\neq 0$, let
\[
W_2 \ = \ \text{\em the companion matrix of the polynomial $(z^{2m}-\alpha z^m)$.}
\]
Note that, by construction, $W_2$ is non-derogatory. We have the characteristic 
polynomials $\chi^{W_1}(z)=z^m$ and $\chi^{W_2}(z)=z^{2m}-\alpha z^m$. Hence
\[
\f(\bcdot;W_1) \ \equiv \ 0, \qquad \f(z;W_2) \ = \ \frac{-m\alpha z^{m-1}}{2m-m\alpha z^m}.
\]
We recall, from Section~\ref{S:MainObs}, the following equivalent form of \eqref{E:necc}:
\begin{equation}\label{E:another}
\text{\em When $M=2$, \eqref{E:necc}}\iff \ \sup_{z\in\cDee}
\left|\frac{\f(z;W_1)-\f(z;W_2)}{1-\overline{\f(z};W_2)\f(z;W_1)}\right|\leq \hyper{\zt_1}{\zt_2}.
\end{equation}
Since, clearly, $\f(\bcdot;W_2)\in\hol(\Dee)\bigcap\smoo(\cDee)$, by the Maximum 
Modulus Theorem 
\begin{align}
\sup_{z\in\cDee}
\left|\frac{\f(z;W_1)-\f(z;W_2)}{1-\overline{\f(z};W_2)\f(z;W_1)}\right| \ &= \
\sup_{z\in\bdy}\frac{m|\alpha|}{|2m-m\alpha z^m|} \notag \\
&= \ \frac{m|\alpha|}{2m-m|\alpha|} \ < \ |\alpha|.\label{E:data1}
\end{align}
\smallskip

Observe that $\sigma(W_1)=\{0\}$ and
$\sigma(W_2)=\{0,|\alpha|^{1/m}e^{i(2\pi j+{\sf Arg}(\alpha))/m}, \ j=1,\dots,m\}$. Therefore,
\begin{align}
\max_{\mu\in\sigma(W_2)}\prod\nolimits_{\lam\in\sigma(W_1)}\mobi(\mu,\lam)^{m(\lam)} \ &= 
\ |\alpha|, \notag \\
\max_{\lambda\in\sigma(W_1)}\prod\nolimits_{\mu\in\sigma(W_2)}\mobi(\lam,\mu)^{m(\mu)} \ &= \ 0.
\notag
\end{align}
We set $\zt_1=0$ and pick $\zt_2\in\Dee$ in such a way that
\begin{equation}\label{E:data2}
\frac{m|\alpha|}{2m-m|\alpha|} \ < \ |\zt_2| \ = \ \hyper{\zt_1}{\zt_2} \ < \ |\alpha|.
\end{equation}
Such a choice of $\zt_2$ is made possible by the inequality \eqref{E:data1}. In view
of the last calculation above, we see that the data-set $((W_1,\zt_1),(W_2,\zt_2))$
constructed violates the inequality \eqref{E:SchwarzIneq}. Thus, there is no
$\hol(\Dee,\OM_{2m})$-interpolant for this data-set. In contrast, since the 
equivalent form \eqref{E:another} of \eqref{E:necc} is satisfied, the latter
does not yield any information about the existence of a $\hol(\Dee,\OM_{2m})$-interpolant.\qed
\end{example} 
\medskip

\section{The Proof of Theorem~\ref{T:spec}}\label{S:proofSpec}
  
In order to prove Theorem~\ref{T:spec}, we shall need the following elementary
  
\begin{lemma}\label{L:mobiusTrans} Given a fractional-linear transformation 
$T(z):=(az+b)/(cz+d)$, if $T(\bdy)\Subset\cplx$, then $T(\bdy)$ is a circle with
\[
{\rm centre}(T(\bdy)) \ = \ \frac{b\overline{d}-a\overline{c}}{|d|^2-|c|^2}, \qquad
{\rm radius}(T(\bdy)) \ = \ \frac{|ad-bc|}{||d|^2-|c|^2|}.
\]
\end{lemma}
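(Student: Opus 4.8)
The plan is to pass to a convenient parametrization of $\bdy$ and compute directly. Write $z=e^{i\theta}$, $\theta\in[0,2\pi)$, so that $\overline{z}=1/z$ on $\bdy$; then $T(z)=(az+b)/(cz+d)$ is a rational function of $z$ with a single pole at $z=-d/c$ (we may assume $c\neq 0$, since the case $c=0$ is immediate: $T$ is affine, $T(\bdy)$ is the circle of centre $b/d$ and radius $|a/d|$, which agrees with the stated formulas). The hypothesis $T(\bdy)\Subset\cplx$ says precisely that $|{-}d/c|\neq 1$, i.e.\ $|d|^2\neq|c|^2$, so the denominator $cz+d$ is bounded away from $0$ on $\bdy$ and $T(\bdy)$ is a compact subset of $\cplx$.

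The cleanest route is to use the fact that a M\"obius transformation maps the circle $\bdy$ to a circle (not a line, since $T(\bdy)$ is bounded), and that a bounded set $T(\bdy)$ is a genuine circle determined by its centre $p$ and radius $\rho$ via the characterization $|w-p|=\rho$ for all $w\in T(\bdy)$. I would proceed as follows. First, compute the image of a few symmetric points: the key observation is that the centre of $T(\bdy)$ is the image under $T$ of the point symmetric to $\infty$ with respect to the circle $T(\bdy)$ — but more elementarily, I would use the standard fact that inversion in the circle $\bdy$ (the map $z\mapsto 1/\overline{z}$) fixes $\bdy$, so $T$ conjugated appropriately relates the two ``symmetric'' points $z=\infty$ and $z=0$ relative to $\bdy$. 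Concretely, the centre $p$ and the two points $T(0)=b/d$ and $T(\infty)=a/c$ satisfy: $T(0)$ and $T(\infty)$ are inverse points with respect to the circle $T(\bdy)$, hence $(T(0)-p)\overline{(T(\infty)-p)}=\rho^2$, and $p$ lies on the line through $T(0)$ and $T(\infty)$. Solving these two conditions for $p$ gives $p=\dfrac{b\overline{d}-a\overline{c}}{|d|^2-|c|^2}$ after simplification, and then $\rho=|T(0)-p|$ can be read off as $\dfrac{|ad-bc|}{\bigl||d|^2-|c|^2\bigr|}$.

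Alternatively — and this is the version I would actually write out, since it avoids invoking the inverse-point lemma — I would just verify the answer by brute force: set $p:=\dfrac{b\overline{d}-a\overline{c}}{|d|^2-|c|^2}$, and for $z\in\bdy$ compute
\[
T(z)-p \ = \ \frac{az+b}{cz+d}-p \ = \ \frac{(a-pc)z+(b-pd)}{cz+d}.
\]
Then $\bigl|T(z)-p\bigr|^2=\dfrac{|(a-pc)z+(b-pd)|^2}{|cz+d|^2}$, and using $|z|=1$ one expands both numerator and denominator as $\alpha+2\,\er(\beta z)$ for suitable constants; the choice of $p$ is exactly what forces the numerator and denominator to be proportional, so the ratio is the constant $\rho^2$. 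A short computation with $z\overline{z}=1$ shows the proportionality constant is $\dfrac{|ad-bc|^2}{\bigl(|d|^2-|c|^2\bigr)^2}$, giving the claimed radius. The main obstacle is purely bookkeeping: keeping the conjugates straight while expanding $|(a-pc)z+(b-pd)|^2$ and $|cz+d|^2$ and checking that the cross terms match up after substituting the formula for $p$; there is no conceptual difficulty, and the hypothesis $|d|^2\neq|c|^2$ is used only to ensure $p$ is well-defined and $T(\bdy)$ is bounded rather than a line.
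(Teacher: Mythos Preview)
The paper does not actually prove this lemma; it is introduced as ``elementary'' and stated without proof, then used immediately in the proof of Theorem~\ref{T:spec}. So there is nothing to compare against, and your proposal is correct.

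Both routes you sketch are valid. For what it's worth, your brute-force verification simplifies more than you anticipate: once you set $p=(b\overline{d}-a\overline{c})/(|d|^2-|c|^2)$, a direct computation gives
\[
a-pc \ = \ \frac{\overline{d}(ad-bc)}{|d|^2-|c|^2}, \qquad b-pd \ = \ \frac{\overline{c}(ad-bc)}{|d|^2-|c|^2},
\]
so the numerator $(a-pc)z+(b-pd)$ factors as $(ad-bc)(\overline{d}z+\overline{c})/(|d|^2-|c|^2)$. For $|z|=1$ one has $|\overline{d}z+\overline{c}|=|\overline{z}(\overline{d}z+\overline{c})|=|\overline{d}+\overline{c}\,\overline{z}|=|cz+d|$, so the denominator $|cz+d|$ cancels and $|T(z)-p|$ collapses immediately to the claimed constant --- no cross-term bookkeeping is needed.
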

\smallskip
  
We are now in a position to present
\smallskip
  
\begin{custom}\begin{proof}[{\bf The proof of Theorem~\ref{T:spec}.}] Let $G\in\hol(\OM_n;\OM_n)$
and let $\lam_1,\dots,\lam_s$ be the distinct eigenvalues of $G(0)$. Define $m(j):=$the multiplicity
of the factor $(\lam-\lam_j)$ in the minimal polynomial of $G(0)$. Define the Blaschke product
\[
B_G(\zt) \ := \ \prod_{j=1}^s\blah{\zt}{\lam_j}^{m(j)}, \quad\zt\in\Dee.
\]
$B_G$ induces the following matrix function which, by a mild abuse of notation, we shall also denote
as $B_G$
\[
B_G(Y) \ := \ \prod_{j=1}^s(\id-\overline{\lam_j} Y)^{-m(j)}(Y-\lam_j\id)^{m(j)} \quad\forall Y\in\OM_n,
\]
which is well-defined on $\OM_n$ precisely as explained in the proof of Lemma~\ref{L:key}. Once again,
owing to the analyticity of $B_G$ on $\OM_n$,
\[
\sigma(B_G(Y)) \ = \ \{B_G(\lam):\lam\in\sigma(Y)\}\quad\forall Y\in\OM_n,
\]
whence $B_G:\OM_n\mapp\OM_n$. Therefore, if we define
\[
H(X) \ := \ B_G\circ G(X) \quad\forall X\in\OM_n,
\]
then $H\in\hol(\OM_n;\OM_n)$ and, by construction, $H(0)=0$. By the Ransford-White result,
$r(H(X))\leq r(X)$, or, more precisely
\[                      
\max_{\mu\in\sigma(G(X))}\left\{\prod_{j=1}^s\hyper{\mu}{\lam_j}^{m(j)}\right\} \ \leq \
r(X) \quad\forall X\in\OM_n.
\]
In particular:
\[
\max_{\mu\in \sigma(G(X))}\left[\mobd(\mu;\sigma(G(0)))^{d_G}\right] \ \leq \ 
r(X) \quad\forall X\in\OM_n,
\]
where, for any compact $K\varsubsetneq\Dee$ and $\mu\in\Dee$, we define
$\mobd(\mu; K):=\min_{\zt\in K}\ilnhyper{\mu}{\zt}$. For the moment, let us fix 
$X\in\OM_n$. For each $\mu\in\sigma(G(X))$, let $\lammu$ be an
eigenvalue of $G(0)$ such that
$\ilnhyper{\mu}{\lammu}=\mobd(\mu;\sigma(G(0)))$. Now fix $\mu\in\sigma(G(X))$. The
above inequality leads to
\begin{equation}\label{E:hyperIneq1}
\hyper{\mu}{\lammu} \ \leq \ r(X)^{1/d_G}.
\end{equation}
Applying Lemma~\ref{L:mobiusTrans} to the M{\"o}bius transformation
\[
T(z) \ = \ \frac{|\mu|z-\lammu}{1-\overline{\lammu}|\mu|z},
\]
we deduce that
\[
\hyper{\zt}{\lammu} \ \geq \ \frac{||\mu|-|\lammu||}{1-|\mu||\lammu|}\quad\forall\zt:|\zt|=|\mu|.
\]
Applying the above fact to \eqref{E:hyperIneq1}, we get
\begin{align}
\frac{|\mu|-|\lammu|}{1-|\mu||\lammu|} \ &\leq \ r(X)^{1/d_G} \notag \\
\Rightarrow\quad |\mu| \ &\leq \frac{r(X)^{1/d_G}+|\lammu|}{1+|\lammu|r(X)^{1/d_G}},
\quad\mu\in\sigma(G(X)).
\label{E:specIneq}
\end{align}
Note that the function
\[
t\longmapsto\frac{r(X)^{1/d_G}+t}{1+r(X)^{1/d_G}t}, \quad t\geq 0,
\]
is an increasing function on $[0,\infty)$. Combining this fact with \eqref{E:specIneq}, we get
\[
|\mu| \ \leq \ \frac{r(X)^{1/d_G}+r(G(0))}{1+r(G(0))r(X)^{1/d_G}},
\]
which holds $\forall\mu\in\sigma(G(X))$, while the right-hand side is independent of $\mu$. Since
this is true for any arbitrary $X\in\OM_n$, we conclude that
\[
r(G(X)) \ \leq \ \frac{r(X)^{1/d_G}+r(G(0))}{1+r(G(0))r(X)^{1/d_G}} \quad\forall X\in\OM_n.
\]
\smallskip

In order to prove the sharpness of \eqref{E:SchwarzIneq}, let us fix an $n\geq 2$, and define
\[
\exep_n \ := \ \{A\in\OM_n:A \ \text{has a single eigenvalue of multiplicity $n$} \}.
\]
Pick any $d=1,\dots,n$, and define
\[
M_d(X) \ := \ \begin{cases}
                \quad [\tr(X)/n],        & \text{if $d=1$}, \\
                \ \begin{bmatrix}
                        \ 0  & {} & {} & \tr(X)/n \ \\
                        \ 1  & 0  & {} & 0 \ \\
                        \ {} & \ddots & \ddots & \vdots \ \\
                        \ {} & {} & 1 & 0 \
                        \end{bmatrix}_{d\times d}, & \text{if $d\geq 2$},
                \end{cases}
\]
and, for the chosen $d$, define $\Shrp$ by the following block-diagonal matrix
\[
\Shrp(Y) \ := \begin{bmatrix}
                        \ M_d(X) & {} \ \\
                        \ {} & \dfrac{\tr(X)}{n}\mathbb{I}_{n-d} \
                        \end{bmatrix} \quad\forall X\in\OM_n.
\]
For our purposes $\shrp=\Shrp$ for each $A\in\exep_n$; i.e., the equality \eqref{E:sharp} will
will hold with the same function for each $A\in\exep_n$. To see this, note that
\begin{itemize}
\item $r(\Shrp(X))=|\tr(X)/n|^{1/d}$; and
\item $\Shrp(0)$ is nilpotent of degree $d$, whence $d_{\Shrp}=d$.
\end{itemize}
Therefore,
\[
\frac{r(A)^{1/d}+r(\Shrp(0))}{1+r(\Shrp(0))r(A)^{1/d}} \ = \
r(A)^{1/d} \ = \ r(\Shrp(A)) \quad\forall A\in\exep_n,
\]
which establishes \eqref{E:sharp}
\end{proof}
\end{custom}
\medskip

\end{document}